\numberwithin{equation}{section}
\newtheorem{thm}{Theorem}[section]
\newtheorem{lem}{Lemma}[section]
\newtheorem{rem}{Remark}[section]
\newtheorem*{rem*}{Remark}
\title 
{On the energy stability of  Strang-splitting for Cahn-Hilliard}
\date{}
\author
{Dong Li
\thanks
{Department of Mathematics, the Hong Kong University of Science
\& Technology, Clear Water Bay, Hong Kong.
 Email: {mpdongli@gmail.com}.
 }\qquad
{Chaoyu Quan}	
\thanks{SUSTech International Center for Mathematics, Southern University of Science and Technology,	Shenzhen, China.
Email: {quancy@sustech.edu.cn}.
}
}
\begin{document}
\maketitle
\begin{abstract}
We consider a Strang-type second
order operator-splitting discretization for the Cahn-Hilliard equation.  We introduce a new theoretical framework
and prove uniform energy stability of the numerical solution and  persistence of all higher
Sobolev norms. This is the first strong stability result for second order operator-splitting 
methods for the Cahn-Hilliard equation. In particular we  settle several long-standing open issues in the work of Cheng, Kurganov, Qu and Tang \cite{Tang15}.
\end{abstract}
\section{Introduction}
In this work we consider  the  Cahn-Hilliard equation (\cite{CH58}) of the form:
\begin{align} \label{1}
\begin{cases}
\partial_t u  = \Delta ( -\nu \Delta u +f (u) ), \quad (t,x) \in (0, \infty) \times \Omega, \\
u \Bigr|_{t=0} =u_0.
\end{cases}
\end{align}
Here  the main unknown $u=u(t,x):\, [0,\infty)\times \Omega \to \mathbb R$ denotes the concentration difference in a binary system. The parameter $\nu>0$ is called the mobility coefficient and we fix it  as a constant for simplicity.  We take the nonlinear term
$f(u)=u^3-u =F^{\prime}(u)$, where $F(u) = \frac 14 (u^2-1)^2$ is the standard double well.
The  minima of this potential are situated  at $u=\pm 1$ which correspond to different phases or states. In order not to overburden the readers
with various subtle technicalities,  we  take the spatial domain $\Omega$ in \eqref{1} as the  one-dimensional $2\pi$-periodic torus $\mathbb T=\mathbb R/ 2\pi \mathbb Z
=[-\pi,\pi]$.  With some additional work our analysis can be extended to other physical dimensions $d\le 3$.  Throughout this note we shall consider mean zero initial data, i.e. $
\frac 1{2\pi} \int_{\mathbb T} u_0 dx =0$. This is clearly
invariant under the dynamics thanks to the mass conservation law.
It follows that $u(t,\cdot)$ has zero mean for all $t>0$.  
The system \eqref{1} naturally arises as a gradient flow of a Ginzburg-Landau type energy
functional $ E(u)$ in $H^{-1}$, where
\begin{equation}
 E(u)= \int_{\Omega} \left( \frac 12 \nu |\nabla u|^2 + F(u) \right) dx
=\int_{\Omega} \left( \frac 12 \nu |\nabla u|^2 + \frac 14 (u^2-1)^2 \right) dx.
\end{equation}
For smooth solutions, the fundamental energy conservation law can be expressed as
\begin{equation}
\frac d {dt}  E ( u(t) ) +
\| |\nabla|^{-1}  \partial_t u \|_2^2
=\frac d {dt}  E(u(t)) + \int_{\Omega}
| \nabla ( -\nu \Delta u + f(u ) ) |^2 dx =0.
\end{equation}
Consequently, one obtains a priori $\dot H^1$-norm control of the solution for all $t>0$. 
Since the scaling-critical space for CH is $H^{-\frac 12}$ in 1D, the global wellposedness and regularity  for $H^1$-initial data follows from standard arguments. 


The main purpose of this work is establish strong stability of a second order strang-type operator-splitting
algorithm applied to the Cahn-Hilliard equation \eqref{1}.  Concerning the operator splitting
approximation of \eqref{1}, there is a lot of flexibility in designing the linear/nonlinear operators
and the interwoven patterns of these operators.  To fix the terminology, let $a$ be a fixed initial
data, and define for $\tau>0$,  $S_L( \tau) a = e^{-\tau (\nu \Delta^2+\Delta)} a$.
In yet other words, $u= S_L(t) a$ solves the equation
\begin{align} \label{G1.4}
\begin{cases}
\partial_t u = -\nu \Delta^2 u - \Delta u,  \quad 0<t\le \tau;\\
u \Bigr|_{t=0} = a.
\end{cases}
\end{align}
Let $u=S_N(t) a$ solve the nonlinear problem
\begin{align} \label{G1.5} 
\begin{cases}
\partial_t u = \Delta (u^3), \quad 0<t\le \tau;\\
u \Bigr|_{t=0} =a.
\end{cases}
\end{align}
Denote by $u=u^{\mathrm{P}}$ the exact PDE solution to \eqref{1} corresponding
to initial data $a$. A Strang-type
approximation amounts to the approximation of the form:
\begin{align} \label{G1.6}
u^{\mathrm{P}} (\tau,\cdot) =  S_L(\frac {\tau}2) S_N(\tau) S_N(\frac{\tau}2) a  +O(\tau^3).
\end{align}
At the cost of some high regularity assumption on $a$ and certain smallness of the time interval length
$\tau$, one can show that \eqref{G1.6} holds in some Sobolev class.  In practical numerical
implementation, we need to iterate \eqref{G1.6} $n$-times, that is
\begin{align}
u^{\mathrm{P}}(n\tau, \cdot) \approx 
\underbrace{S_L(\frac {\tau}2) S_N(\tau) S_N(\frac{\tau}2)\cdots S_L(\frac {\tau}2) S_N(\tau) S_N(\frac{\tau}2)} _{\text{$n$ times}} a.
\end{align}
To justify the convergence and stability of the numerical approximation, a fundamental
problem is to establish an estimate of the form 
\begin{align}
\sup_{n\ge 1} \| \underbrace{S_L(\frac {\tau}2) S_N(\tau) S_N(\frac{\tau}2)\cdots S_L(\frac {\tau}2) S_N(\tau) S_N(\frac{\tau}2)} _{\text{$n$ times}} a \|_{H^k} \lesssim  1,
\end{align}
where we assume $a \in H^k$ for some $k\ge 1$, and $\tau$ is taken to be sufficiently small. 
This is by no means trivial since \eqref{G1.4} in general only guarantees 
$\| S_L(\frac {\tau} 2 ) a \|_2 \le e^{c \tau} \| a\|_2$ and the nonlinear evolution \eqref{G1.5} only gives control of the $L^p$-norm. Needless to say, the wellposedness of \eqref{G1.5} in Sobolev class and control of the lifespan of the local solution also present various subtle technical difficulties. 
Another variation of the theme for the operator-splitting approximation of \eqref{1} goes as follows.
 Define for $\tau>0$,  $S^{(1)}_L( \tau) a = e^{-\nu \tau \Delta^2} a$, i.e.
  $u= S^{(1)}_L(t) a$ solves the equation
\begin{align} 
\begin{cases}
\partial_t u = -\nu \Delta^2 u,  \quad 0<t\le \tau;\\
u \Bigr|_{t=0} = a.
\end{cases}
\end{align}
Let $u=S^{(1)}_N(t) a$ solve the nonlinear problem
\begin{align} 
\begin{cases}
\partial_t u = \Delta (u^3-u), \quad 0<t\le \tau;\\
u \Bigr|_{t=0} =a.
\end{cases}
\end{align}
We then approximate $u^{\mathrm{P}}(\tau, \cdot)$ via the scheme
\begin{align}
u^{\mathrm{P}}(\tau, \cdot) \approx
S^{(1)}_L(\frac {\tau}2) S^{(1)}_N(\tau) S^{(1)}_L(\frac {\tau}2) a.
\end{align}
One should note that although we have $\| S^{(1)}_L(\tilde  \tau) a\|_{H^k} \le 
\|a \|_{H^k}$ for any $k\ge 0$. The nonlinear evolution $S^{(1)}_N(\tau)$ no longer
has contraction in $L^p$. This brings essential technical difficulties for the stability analysis.

Due to these aforementioned technical obstructions, there were very few rigorous results on the analysis of the operator-splitting
type algorithms for the Cahn-Hilliard equation and similar models\footnote{Most results in the
literature are conditional one way or another in disguise.}. In \cite{Red19}, Gidey and Reddy considered a convective Cahn-Hilliard
model of the form
\begin{align} \label{1.14}
\partial_t u - \gamma \nabla \cdot \mathbf{h}(u) + \epsilon^2 \Delta^2 u
=\Delta (f(u)),
\end{align}
where $\mathbf{h}(u) =\frac 12 (u^2, u^2)$. 
By using operator-splitting , \eqref{1.14} were split  into the hyperbolic part, nonlinear diffusion part and diffusion part respectively. Some conditional results  concerning certain weak solutions were obtained in \cite{Red19}.
  In \cite{Feng19},
Weng, Zhai and Feng considered a viscous Cahn-Hilliard model of the form
\begin{align}
(1-\alpha) \partial_t u = \Delta ( - \epsilon^2 \Delta u + f(u ) + \alpha \partial_t u),
\end{align}
where the parameter $\alpha \in (0, 1)$. Weng, Zhai and Feng considered a fast explicit Strang splitting and showed
stability and convergence under the assumption that $A=\|\nabla u^{\operatorname{num}}\|^2_{\infty}$, $B=
\| u^{\operatorname{num}} \|_{\infty}^2$ are bounded,  and satisfy a technical condition
$6A+8-24B>0$ (see Theorem 1 on pp. 7 of \cite{Feng19}), where $u^{\mathrm{num}}$ denotes
the numerical solution.

The first genuine progress on the energy-stability analysis of the operator-splitting approximation 
of \eqref{1} were made  in recent \cite{21osCH},  where we considered a splitting approximation 
of \eqref{1} of the form:
\begin{align}
u^{\mathrm{P}}(\tau, \cdot) \approx S_L^{(1)}(\tau) S_N^{(2)}(\tau) a.
\end{align}
Here $u= S_N^{(2)}(\tau) a$ solves
\begin{align}
\frac {u - a} {\tau} =   \Delta ( a^3 -a).
\end{align}
By introducing a novel modified energy, we showed monotonic decay of the new modified
energy which is coercive in $H^1$-sense.  Moreover we also obtained uniform control of higher Sobolev regularity. However, this line of analysis relies in an essential way the monotonicity of the
modified energy and has no bearing on the
second-order and higher case which have some intrinsic technical difficulties.  In \cite{Tang15}, Cheng, Kurganov, Qu and Tang considered the Strang splitting
for the Cahn-Hilliard equation in the style of \eqref{G1.6}.  Some conditional results were given
in \cite{Tang15} but the rigorous analysis of energy stability has remained an outstanding open problem.
The purpose of this work is to establish a completely new theoretical framework for the rigorous
analysis of energy stability and higher-order Sobolev-norm stability for higher order operator-splitting
method such as \eqref{G1.6}. Our first result reads as follows.
 
\begin{thm}\label{thm0}
Let $\nu>0$ and consider the one-dimensional periodic torus $\mathbb T
=[-\pi, \pi]$.  Assume the initial data $u^0
\in H^{k_0}(\mathbb T)$ ($k_0\ge 1$ is an integer) and has mean zero.  Let $\tau>0$ and define
\begin{align}
u^{n+1} = S_L( \frac {\tau}2 ) S_N(\tau )   S_L(\frac {\tau}2) u^n, \quad n\ge 0.
\end{align}
There exists a constant $\tau_*>0$ depending only on $\|u^0\|_{2}$ and
$\nu$, such that if $0<\tau <\tau_*$, then
\begin{align}
\sup_{n\ge 0} \| u^n \|_{H^{k_0}} \le A_1<\infty,
\end{align}
where $A_1>0$ depends on ($\| u^0\|_{H^{k_0} }$, $\nu$, $k_0$). 
\end{thm}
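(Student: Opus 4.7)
The strategy is a bootstrap argument centered on a modified energy that is approximately monotone along the Strang iterates, followed by induction on the Sobolev index to transfer the resulting $H^1$ bound up to $H^{k_0}$. As a preparatory step I would record basic stability estimates for the two sub-propagators. The linear step $S_L(\tau/2)$ is the Fourier multiplier $e^{-(\tau/2)(\nu k^4-k^2)}$; on mean-zero functions the zero mode vanishes and only the finitely many Fourier modes with $k^2<1/\nu$ are unstable, so
\begin{equation}
\|S_L(\tau/2) v\|_{H^s}\le e^{C(\nu)\tau}\|v\|_{H^s},\qquad s\ge 0.
\end{equation}
For $S_N(\tau)$, pairing $\partial_t u=\Delta(u^3)$ with $u$ and integrating by parts yields $\frac{d}{dt}\|u\|_2^2=-6\int u^2|\nabla u|^2\le 0$, so the $L^2$-norm is non-increasing; standard higher-order energy estimates together with the 1D embedding $H^1(\mathbb T)\hookrightarrow L^\infty$ then control $\|S_N(\tau) v\|_{H^s}$ on a short time window.

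The delicate point is a uniform $H^1$-bound, which cannot follow from naive composition since $S_L$ enhances low frequencies and $S_N$ does not bound higher Sobolev norms. I would view each Strang step as an $O(\tau^3)$ perturbation of the exact PDE propagator $\Phi_\tau$ and track the Ginzburg--Landau energy $E$, aiming for an approximate step identity
\begin{equation}
E(u^{n+1})-E(u^n)\le -\int_0^\tau D(\Phi_s(u^n))\,ds+R_n,\qquad D(u)=\int_{\mathbb T}|\nabla(-\nu\Delta u+f(u))|^2\,dx.
\end{equation}
A direct Taylor expansion gives $R_n=O(\tau^3)$ times a polynomial in $\|u^n\|_{H^{k_0}}$ with sign-indefinite leading constant. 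To repair the sign I would introduce a modified energy $\tilde E(u)=E(u)+\tau^2 G(u)$, with $G$ assembled from the Baker--Campbell--Hausdorff bracket of the generators of $S_L$ and $S_N$, chosen so that the analogous identity for $\tilde E$ has its second-order Strang correction cancelled and its new residual absorbable by a fraction of the dissipation.

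To close the argument, assume inductively $\sup_{m\le n}\|u^m\|_{H^{k_0}}\le M$; applying the modified identity gives $\tilde E(u^{n+1})\le \tilde E(u^n)$ up to a small manageable error, and the coercivity $\tilde E(u)\gtrsim \|u\|_{H^1}^2-C(\nu)$ upgrades this to a uniform $H^1$-bound once $\tau$ is small. The passage from $H^1$ to $H^{k_0}$ is then an induction on $k$: granted a uniform bound at level $k-1$, an $H^k$ energy estimate on each sub-step uses the high-frequency smoothing of $S_L$ to dominate the cubic growth from $S_N$, the latter controlled by Gagliardo--Nirenberg against the already established lower-regularity norm. The main obstacle I anticipate is the construction and analysis of the correction $G$: the algebraic matching required to cancel the sign-indefinite second-order Strang error and absorb the rest into $D$ is delicate, and must exploit the mean-zero constraint, the exact cubic structure of $f(u)=u^3-u$, and the symmetric $L$-$N$-$L$ layout of the Strang composition. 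It is precisely at this step that the paper's new theoretical framework is expected to enter.
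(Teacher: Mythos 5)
Your plan hinges on constructing a modified energy $\tilde E(u)=E(u)+\tau^2G(u)$ that is (approximately) monotone along the Strang iterates, and you yourself flag the construction of $G$ as the unresolved obstacle. This is a genuine gap, and it is precisely the route the paper declines to take: the authors note that the modified-energy strategy from their first-order splitting work ``has no bearing on the second-order and higher case.'' The concrete failure mode is this: even granting a corrector $G$ that cancels the sign-indefinite $O(\tau^2)$ Strang term, the residual $R_n=O(\tau^3)$ must be absorbed into the dissipation $\int_0^\tau D(\Phi_s(u^n))\,ds$ to obtain a bound uniform in $n$ (the theorem asserts $\sup_{n\ge 0}$, not a bound on a finite time horizon). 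But $D$ vanishes on (approximate) steady states, so there is no uniform lower bound on the dissipation and the per-step errors accumulate as $n\tau^3\to\infty$. Your induction hypothesis $\sup_{m\le n}\|u^m\|_{H^{k_0}}\le M$ cannot close this, since the error constants depend on $M$ and the number of steps is unbounded. Separately, the theorem requires $\tau_*$ to depend only on $\|u^0\|_2$ and $\nu$, whereas your smallness condition on $\tau$ would depend on the higher norm $M$.

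The paper circumvents exactly this obstruction with a dichotomy rather than a monotone quantity. First, a smoothing step (Lemma \ref{lemG2}) shows that after $O(1/\tau)$ iterations the solution lies in $H^{40}$ with a bound depending only on $\nu$ and $\|u^0\|_2$; this is what makes $\tau_*$ depend only on the $L^2$ norm. Then one fixes an energy threshold $G$ and argues by contradiction at the first crossing time $n_0$: either $u^{n_0}$ is an ``almost steady state,'' i.e.\ $\|\nu\partial_{xx}u^{n_0}-(u^{n_0})^3+\overline{(u^{n_0})^3}+u^{n_0}\|_2\le 1$, in which case Lemma \ref{lemG3} bounds the post-step energy by a universal constant below $G$; or the dissipation functional is bounded below by $1$, in which case the exact flow drops the energy by at least $\tau/2$ over one step, and the Strang step deviates from the exact flow by only $O(\tau^2)$ in $H^1$ (Lemma \ref{lemG4}), so the energy still strictly decreases. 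Either branch contradicts the crossing. The energy is thus allowed to oscillate; it simply cannot exceed the threshold. Your preparatory estimates for $S_L$ and $S_N$ and the final $H^1\to H^{k_0}$ upgrade by smoothing are consistent with the paper, but the core of your argument would need to be replaced by something like this near-equilibrium/far-from-equilibrium case analysis.
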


Our second result establishes the  convergence of the operator splitting approximation.
Not surprisingly since this is a Strang-type splitting approximation, the convergence is second order in $\tau$ on any finite time interval $[0,T]$. 
\begin{thm}[Convergence of the splitting approximation] \label{thm1}
Assume the initial data $u^0 \in H^{40}(\mathbb T)$ with mean zero. 
Let $u^n$ be defined as in Theorem \ref{thm0}.  Let $u$ be the exact PDE solution
to \eqref{1}  corresponding to initial data $u^0$. Let $0<\tau <\tau_*$ as in Theorem
\ref{thm0}. Then for any $T>0$, we have
\begin{align}
\sup_{n\ge 1, n\tau \le T}  \| u^n - u(n\tau, \cdot ) \|_{L^2(\mathbb T)}
\le C \cdot \tau^2,
\end{align}
where $C>0$ depends on ($\nu$, $\|u^0\|_{H^{40}}$, $T$).
\end{thm}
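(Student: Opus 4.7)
The plan is the classical Lady Windermere's fan argument: combine a local truncation error of order $\tau^3$ in $L^2$ with an $L^2$-Lipschitz stability estimate on the one-step operator
$\Psi(\tau) := S_L(\tau/2)\,S_N(\tau)\,S_L(\tau/2)$,
then iterate. Theorem~\ref{thm0}, applied with $k_0 = 40$, provides the indispensable uniform bound $\sup_{n} \|u^n\|_{H^{40}} \le A_1$. The analogous bound on the exact solution, $\sup_{t\in[0,T]} \|u(t,\cdot)\|_{H^{40}} \le B_T$, follows from the well-posedness remarks in the introduction together with the a priori $\dot{H}^1$ control furnished by the energy identity and standard bootstrapping in the Cahn-Hilliard equation.

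The main technical step is the local error. Fix $a$ with $\|a\|_{H^{40}}$ bounded, and write $Lv := -\nu\Delta^2 v - \Delta v$ and $N(v) := \Delta(v^3)$, so that the exact PDE satisfies $\dot u = Lu + N(u)$. I would Taylor-expand both $\Phi(\tau) a$ (the exact flow) and $\Psi(\tau) a$ in powers of $\tau$. The first-order terms match by consistency ($La + N(a)$), and the second-order terms match by the palindromic structure of Strang splitting. Hence
$\Psi(\tau) a - \Phi(\tau) a = \tau^3 R(a) + O(\tau^4)$,
where $R(a)$ is a universal combination of iterated commutators such as $[L,[L,N]](a)$, $[N,[N,L]](a)$, and terms involving higher Fr\'echet derivatives $N''(a)[\cdot,\cdot]$, $N'''(a)[\cdot,\cdot,\cdot]$. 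Each piece is a polynomial differential operator of bounded order (at most, say, $14$) applied to a polynomial in $a$, and is estimated in $L^2$ by $C\,P(\|a\|_{H^{14}})$. The formal expansion is made rigorous through the integral form of Taylor's theorem, which forces uniform control of intermediate states $S_N(s)a$ for $s\in[0,\tau]$ in $H^{40}$; a short-time well-posedness argument for \eqref{G1.5} on the $H^{40}$-ball of radius $2(A_1 + B_T)$ provides this for $\tau$ small enough.

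Next I would prove $L^2$-Lipschitz stability of $\Psi(\tau)$. For $S_L$, a Fourier computation yields $\|S_L(\tau/2)(a-b)\|_{L^2} \le e^{C\tau}\|a-b\|_{L^2}$, since $-\nu|k|^4+|k|^2$ is positive only for the finitely many modes with $|k|^2 < 1/\nu$. For $S_N$, set $u = S_N(\cdot)a$, $v = S_N(\cdot)b$, $w = u-v$. Then $\partial_t w = \Delta(pw)$ with $p := u^2+uv+v^2\ge 0$. Writing this in divergence form and integrating by parts,
$\tfrac{1}{2}\tfrac{d}{dt}\|w\|_{L^2}^2 = -\int p\,|\nabla w|^2\,dx + \tfrac{1}{2}\int w^2\,\Delta p\,dx \le \tfrac{1}{2}\|\Delta p\|_{L^\infty}\|w\|_{L^2}^2.$
Since $\|\Delta p\|_{L^\infty} \lesssim (\|u\|_{H^3}+\|v\|_{H^3})^2 \lesssim (A_1+B_T)^2$, Gronwall gives $\|S_N(\tau)(a-b)\|_{L^2} \le e^{C\tau}\|a-b\|_{L^2}$. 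Composing yields $\|\Psi(\tau)(a-b)\|_{L^2} \le (1+C'\tau)\|a-b\|_{L^2}$.

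Setting $e^n := u^n - u(n\tau,\cdot)$ and splitting
$e^{n+1} = \bigl[\Psi(\tau) u^n - \Psi(\tau) u(n\tau)\bigr] + \bigl[\Psi(\tau) - \Phi(\tau)\bigr] u(n\tau)$,
the two previous steps combine to give $\|e^{n+1}\|_{L^2} \le (1+C'\tau)\|e^n\|_{L^2} + C''\tau^3$, and discrete Gronwall over $n \le T/\tau$ delivers $\|e^n\|_{L^2} \le C''' \tau^2 e^{C'T}$. The main obstacle is the rigorous derivation of the $\tau^3$ residue $R(a)$: unlike the linear ODE setting where Baker-Campbell-Hausdorff supplies a closed form, one of the two flows is the nonlinear $S_N$, so the expansion must be carried out on the flow itself, keeping careful track of which intermediate states arise and which Sobolev norms of $a$ enter. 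The generous assumption $u^0 \in H^{40}$ is precisely what turns this bookkeeping from a fragile calculation into a routine one: every differential-polynomial monomial that appears in $R(a)$ is harmlessly dominated by a fixed power of $\|a\|_{H^{40}}$.
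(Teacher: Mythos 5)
Your proposal is correct and follows essentially the same architecture as the paper's proof: uniform $H^{40}$ bounds on both the numerical iterates and the exact solution, a local truncation error of order $\tau^3$ obtained by matching the second-order Taylor/Duhamel expansions of $S_L(\tau/2)S_N(\tau)S_L(\tau/2)a$ and of the exact flow, and then $L^2$-stability plus discrete Gronwall. The only difference is one of emphasis: the paper computes the consistency expansion explicitly and leaves the $L^2$-Lipschitz stability of the one-step map implicit, whereas you sketch the consistency via the palindromic/commutator structure and spell out the stability step (the $\partial_t w=\Delta(pw)$ estimate with $p=u^2+uv+v^2\ge 0$), which is a sound and welcome addition.
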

\begin{rem}
The regularity assumption on initial data can be lowered but we shall not dwell on this
issue here for simplicity of presentation. One can also work out the convergence in
higher Sobolev norms. 
\end{rem}
The rest of this paper is organized as follows. In Section $2$ we set up the notation and collect
some preliminary lemmas.  In Section $3$ we carry out the main analysis for the propagators.
In Section $4$ we complete the proofs of Theorem \ref{thm0} and \ref{thm1}.

\section{Notation and preliminaries}

For any two positive quantities $X$ and $Y$, we shall write $X\lesssim Y$ or $Y\gtrsim X$ if
$X \le  CY$ for some  constant $C>0$ whose precise value is unimportant.
We shall write $X\sim Y$ if both $X\lesssim Y$ and $Y\lesssim X$ hold.
We write $X\lesssim_{\alpha}Y$ if the
constant $C$ depends on some parameter $\alpha$.
We shall
write $X=O(Y)$ if $|X| \lesssim Y$ and $X=O_{\alpha}(Y)$ if $|X| \lesssim_{\alpha} Y$.

We shall denote $X\ll Y$ if
$X \le c Y$ for some sufficiently small constant $c$. The smallness of the constant $c$ is
usually clear from the context. The notation $X\gg Y$ is similarly defined. Note that
our use of $\ll$ and $\gg$ here is \emph{different} from the usual Vinogradov notation
in number theory or asymptotic analysis.

For any $x=(x_1,\cdots, x_d) \in \mathbb R^d$, we denote $|x| =|x|_2=\sqrt{x_1^2+\cdots+x_d^2}$, and
$|x|_{\infty} =\max_{1\le j \le d} |x_j|$.
Also occasionally we use the Japanese bracket notation:
$\langle x \rangle =(1+|x|^2)^{\frac 12}.$

We denote by $\mathbb T^d=[-\pi, \pi]^d = \mathbb R^d/2\pi \mathbb Z^d$ the usual
$2\pi$-periodic torus.
For $1\le p \le \infty$ and any function $f:\, x\in \mathbb T^d \to \mathbb R$, we denote
the Lebesgue $L^p$-norm of $f$ as
\begin{align*}
\|f \|_{L^p_x(\mathbb T^d)} =\|f\|_{L^p(\mathbb T^d)} =\| f \|_p.
\end{align*}
If $(a_j)_{j \in I}$ is a sequence of complex numbers
and $I$ is the index set, we denote the discrete $l^p$-norm
as
\begin{equation}
\| (a_j) \|_{l_j^p(j\in I)} = \| (a_j) \|_{l^p(I)} =
\begin{cases}
 {\displaystyle \left(\sum_{j\in I} |a_j|^p\right)^{\frac 1p}}, \quad 0<p<\infty, \\
 \sup_{j\in I} |a_j|, \quad \qquad p=\infty.
 \end{cases}
 \end{equation}
 For example,
$ \| \hat f(k) \|_{l_k^2(\mathbb Z^d)} = \left(\sum_{k \in \mathbb Z^d} |\hat f(k)|^2\right)^{\frac 12}$.
If $f=(f_1,\cdots,f_m)$ is a vector-valued function, we denote
$|f| =\sqrt{\sum_{j=1}^m |f_j|^2}$, and
$\| f\|_p = \| ({\sum_{j=1}^m f_j^2})^{\frac 12} \|_p$.
We use similar convention for the corresponding discrete $l^p$ norms for the vector-valued
case.


We use the following convention for the Fourier transform pair:
\begin{equation} \label{eqFt2}
\hat f(k) = \int_{\mathbb T^d} f(x) e^{- i k\cdot x} dx, \quad
 f(x) =\frac 1 {(2\pi)^d}\sum_{k\in \mathbb Z^d} \hat f(k) e^{ ik \cdot x},
\end{equation}
and denote for $0\le s \in \mathbb R$,
\begin{subequations}
\begin{align}
&\|f \|_{\dot H^s} = \|f \|_{\dot H^s(\mathbb T^d)} = \| |\nabla|^s f \|_{L^2(\mathbb T^d)}
\sim \|  |k|^s \hat f (k) \|_{l^2_k (\mathbb Z^d)}, \\
& \| f \|_{H^s} = \sqrt{\| f \|_2^2 + \| f\|_{\dot H^s}^2}  \sim \| \langle
 |k| \rangle^s \hat f(k) \|_{l^2_k(\mathbb Z^d)}.
\end{align}
\end{subequations}

\begin{lem} \label{leKbeta}
Let $\nu>0$, $d\ge 1$ and $\beta>0$. Consider on the torus $\mathbb T^d=[-\pi, \pi]^d$,
\begin{equation}
K(x) = \mathcal F^{-1} ( e^{-\beta (\nu |k|^4 -|k|^2)} )
=e^{-\beta(\nu \Delta^2+\Delta) } \delta_0,
\end{equation}
 where $\delta_0$ is the periodic Dirac comb. Then for any $1\le p\le \infty$,
\begin{align}
\| K \|_{L^p(\mathbb T^d)} \le c_{d,p, \nu}\,  (1+\beta^{-d(\frac 14 -\frac 1{4p})})
e^{d_1 \beta},
\end{align}
where $c_{d,p, \nu}>0$ depends only on ($d$, $p$, $\nu$) and $d_1>0$ depends only
on ($d$, $\nu$).  
\end{lem}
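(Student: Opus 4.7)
The plan is to prove the two endpoint bounds $\|K\|_{L^1(\mathbb T^d)} \lesssim_{d,\nu} e^{d_1\beta}$ and $\|K\|_{L^\infty(\mathbb T^d)} \lesssim_{d,\nu} (1+\beta^{-d/4})e^{d_1\beta}$ and then interpolate via the log-convexity inequality $\|K\|_p \leq \|K\|_1^{1/p}\|K\|_\infty^{1-1/p}$; together with the elementary estimate $(1+\beta^{-d/4})^{1-1/p} \lesssim 1+\beta^{-d(1/4 - 1/(4p))}$ this produces exactly the target bound. The algebraic fact driving everything is the completion of the square $\nu|k|^4 - |k|^2 = \nu(|k|^2 - \frac{1}{2\nu})^2 - \frac{1}{4\nu}$, which yields both the global lower bound $\nu|k|^4 - |k|^2 \geq -\frac{1}{4\nu}$ (accounting for the growth factor $e^{d_1\beta}$) and the improved high-frequency bound $\nu|k|^4 - |k|^2 \geq \frac{\nu}{2}|k|^4$ whenever $|k|^2 \geq \frac{2}{\nu}$.

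For the $L^\infty$ estimate I would split $K = K_{\mathrm{low}} + K_{\mathrm{high}}$ in Fourier at the threshold $R^2 = 2/\nu$. The low-frequency part is a trigonometric polynomial with $O_{d,\nu}(1)$ nonzero modes each bounded by $e^{\beta/(4\nu)}$, so $\|K_{\mathrm{low}}\|_\infty \lesssim_{d,\nu} e^{\beta/(4\nu)}$. For the high-frequency part, direct Fourier summation gives $\|K_{\mathrm{high}}\|_\infty \leq (2\pi)^{-d}\sum_{k}e^{-\beta\nu|k|^4/2}$, and a standard sum-integral comparison after the rescaling $\eta = (\beta\nu/2)^{1/4}k$ produces $\sum_k e^{-\beta\nu|k|^4/2} \lesssim_{d,\nu} 1 + \beta^{-d/4}$.

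The $L^1$ bound is the technical heart of the proof and the main obstacle. I would apply the Poisson summation formula to write $K$ as the periodization
\begin{equation*}
K(x) = \sum_{n\in\mathbb Z^d}\tilde G(x + 2\pi n), \qquad \tilde G(y) := (2\pi)^{-d}\!\int_{\mathbb R^d}\! e^{-\beta(\nu|\xi|^4 - |\xi|^2)}e^{iy\cdot\xi}\,d\xi,
\end{equation*}
so that $\|K\|_{L^1(\mathbb T^d)} \leq \|\tilde G\|_{L^1(\mathbb R^d)}$. Completing the square on the Fourier side and rescaling $\eta = \sqrt{2\nu}\,\xi$ identifies $\tilde G$ with a universal shell-multiplier kernel:
\begin{equation*}
\|\tilde G\|_{L^1(\mathbb R^d)} = e^{\beta/(4\nu)}\|\Phi_\lambda\|_{L^1(\mathbb R^d)}, \qquad \lambda := \beta/(4\nu),
\end{equation*}
where $\Phi_\lambda(z) := (2\pi)^{-d}\int_{\mathbb R^d} e^{-\lambda(|\eta|^2 - 1)^2}e^{iz\cdot\eta}\,d\eta$. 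The remaining task is to control $\|\Phi_\lambda\|_{L^1(\mathbb R^d)}$ at worst polynomially in $\lambda$. For bounded $\lambda$ this is immediate since $\Phi_\lambda$ is Schwartz. For large $\lambda$ the multiplier $e^{-\lambda(|\eta|^2 - 1)^2}$ concentrates in a shell of thickness $\lambda^{-1/2}$ about the unit sphere, and combining a one-dimensional Gaussian estimate in the radial direction (using $|\eta|^2 - 1 \approx 2(|\eta|-1)$ near the shell) with the classical stationary-phase decay $|\widehat{d\sigma_{S^{d-1}}}(z)| \lesssim (1+|z|)^{-(d-1)/2}$ of the surface measure yields the pointwise bound $|\Phi_\lambda(z)| \lesssim \lambda^{-1/2}(1+|z|)^{-(d-1)/2}e^{-c|z|^2/\lambda}$; integration in $z$ (splitting at $|z| \sim \sqrt\lambda$) gives $\|\Phi_\lambda\|_{L^1(\mathbb R^d)} \lesssim_d \lambda^{(d-1)/4}$. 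Any such polynomial factor is absorbed into a slightly larger $e^{d_1\beta}$, so $\|\tilde G\|_{L^1(\mathbb R^d)} \lesssim_{d,\nu} e^{d_1\beta}$ for some $d_1 = d_1(d,\nu)$ strictly greater than $1/(4\nu)$.

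The principal difficulty is thus this oscillatory estimate for $\Phi_\lambda$: pointwise bounds on Fourier coefficients give $L^2$ and $L^\infty$ bounds immediately but are inherently insufficient for the $L^1$ endpoint, and recovering it requires stationary-phase analysis of a spherically-localized multiplier (essentially a thickened Fourier transform of $S^{d-1}$). Once both endpoints are in hand, the interpolation $\|K\|_p \leq \|K\|_1^{1/p}\|K\|_\infty^{1-1/p}$ delivers the claimed $c_{d,p,\nu}(1+\beta^{-d(1/4 - 1/(4p))})e^{d_1\beta}$ bound.
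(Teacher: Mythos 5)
Your proof is correct, and while it shares the paper's central device --- Poisson summation to reduce the periodic kernel to a whole-space kernel --- it organizes the estimate differently and, in one respect, more carefully. The paper's proof rests on a single pointwise bound $|K_w(x)|\lesssim \langle x\rangle^{-10d}e^{d_1\beta}(1+\beta^{-d/4})$ followed by periodization; as displayed (with decay at unit scale in $x$), that bound only yields $\|K\|_{L^p}\lesssim e^{d_1\beta}(1+\beta^{-d/4})$ for \emph{every} $p$, and recovering the stated exponent $\beta^{-d(\frac14-\frac1{4p})}$ implicitly requires knowing that the decay of $K_w$ lives at the natural spatial scale $\beta^{1/4}$. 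Your route --- proving the two endpoints $\|K\|_{L^1}\lesssim e^{d_1\beta}$ and $\|K\|_{L^\infty}\lesssim(1+\beta^{-d/4})e^{d_1\beta}$ and interpolating via $\|K\|_p\le\|K\|_1^{1/p}\|K\|_\infty^{1-1/p}$ --- produces the claimed $p$-dependence cleanly and isolates where the real work is: the $L^\infty$ bound is a crude Fourier-side summation, while the $L^1$ bound genuinely requires oscillatory-integral information (your reduction to the shell multiplier $e^{-\lambda(|\eta|^2-1)^2}$ and the stationary-phase decay of $\widehat{d\sigma_{S^{d-1}}}$, giving $\|\Phi_\lambda\|_{L^1}\lesssim_d\lambda^{(d-1)/4}$, which is harmlessly absorbed into $e^{d_1\beta}$). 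Two small points to tidy up: for bounded $\lambda$ you should note that the Schwartz seminorms of $\Phi_\lambda$ (equivalently of the rescaled family $\mathcal F^{-1}(e^{-(|\zeta|^2-\sqrt\lambda)^2})$) are bounded \emph{uniformly} in $\lambda\in(0,1]$, not merely finite for each fixed $\lambda$; and you should record that Poisson summation is legitimate here because the multiplier $e^{-\beta(\nu|\xi|^4-|\xi|^2)}$ is Schwartz, so $\tilde G$ decays rapidly. Neither affects the correctness of the argument.
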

\begin{proof}[Proof of Lemma \ref{leKbeta}]
We shall write $X\lesssim Y$ if $X \le C Y$ and $C$ depends on ($d$, $\nu$, $p$).

Define $K_w(x) = (2\pi)^{-d}  \int_{\mathbb R^d}
e^{ i \xi \cdot x} e^{-\beta(\nu |\xi|^4 -|\xi|^2)} d\xi$.  It is not difficult to check that
\begin{align}
|K_w(x) | \lesssim \langle x \rangle^{-10d} e^{d_1\beta} (1+\beta^{-\frac d4} ).
\end{align}

Poisson summation gives
\begin{align} \label{2.8Ta}
K(x)  =\sum_{l \in \mathbb Z^d} K_w( x+ 2\pi l).
\end{align}
The desired estimate then follows easily.
\end{proof}

\begin{lem} \label{lem2.2}
Let  $d=1$ and $\nu>0$. Let $0<\tau\le 1$. Then for any $g\in L^4(\mathbb T)$,
we have
\begin{align}
&\| e^{- \tau  (\nu \partial_x^4 +\partial_x^2)}  g \|_{\infty} \le C_1 \tau^{-\frac 1{16} }
\|g\|_4,
\end{align}
where $C_1>0$ depends on $\nu$. 
For any $g_1\in L^{\frac 43}(\mathbb T)$, we have
\begin{align}
& \| \tau \partial_{xx} e^{- \tau (\nu  \partial_x^4 +\partial_x^2)} g_1\|_{\infty}
\le C_2  \tau^{\frac 5 {16}} \| g_1\|_{\frac 43},
\end{align}
where $C_2>0$ depends on $\nu$.
\end{lem}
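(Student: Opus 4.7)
Both inequalities are of the form ``$L^p \to L^\infty$ bound for a Fourier multiplier,'' so I would view each as a convolution on the torus and apply Young's inequality $\|K \ast g\|_\infty \le \|K\|_{p'}\|g\|_p$. This reduces the problem to an $L^{p'}(\mathbb T)$ bound on the corresponding periodic kernel, which I would estimate by Poisson summation from its whole-line analogue, exactly as in the proof of Lemma~\ref{leKbeta}. The first estimate is then essentially a direct corollary of Lemma~\ref{leKbeta}; the second requires a short parallel computation that I sketch below.

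For the first inequality, write $e^{-\tau(\nu\partial_x^4+\partial_x^2)} g = K \ast g$, where $K = e^{-\tau(\nu\Delta^2+\Delta)}\delta_0$. Young with $(p,p') = (4,4/3)$ gives $\|K \ast g\|_\infty \le \|K\|_{4/3}\|g\|_4$. Applying Lemma~\ref{leKbeta} with $d=1$, $p=4/3$ and $\beta = \tau$ produces $\|K\|_{4/3} \lesssim_\nu (1 + \tau^{-(1/4 - 3/16)})e^{d_1\tau} = (1+\tau^{-1/16})e^{d_1\tau}$. Since $0<\tau\le 1$, both $e^{d_1\tau}$ and $1$ are $\lesssim \tau^{-1/16}$, and the first claim follows with $C_1 = C_1(\nu)$.

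For the second inequality, I would similarly write $\tau\partial_{xx} e^{-\tau(\nu\partial_x^4+\partial_x^2)} g_1 = M \ast g_1$, where $M$ is the periodic kernel whose Fourier coefficients are $-\tau k^2 e^{-\tau(\nu k^4 - k^2)}$. Young with $(p,p') = (4/3,4)$ reduces the problem to proving $\|M\|_{L^4(\mathbb T)} \lesssim_\nu \tau^{5/16}$. Imitating the proof of Lemma~\ref{leKbeta}, I would introduce
\begin{equation*}
  M_w(x) = \frac{1}{2\pi}\int_{\mathbb R} (-\tau\xi^2)\, e^{i\xi x}\, e^{-\tau(\nu\xi^4 - \xi^2)}\, d\xi,
\end{equation*}
and recover $M(x) = \sum_{l\in\mathbb Z} M_w(x+2\pi l)$. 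The scaling $\xi = \tau^{-1/4}\eta$ turns this into
\begin{equation*}
  M_w(x) = -\frac{\tau^{1/4}}{2\pi}\int_{\mathbb R} \eta^2\, e^{i\tau^{-1/4}\eta x}\, e^{-\nu\eta^4 + \tau^{1/2}\eta^2}\, d\eta,
\end{equation*}
which, using $\tau^{1/2}\eta^2 \le \tfrac{\nu}{2}\eta^4 + C_\nu$, immediately yields $|M_w(x)| \lesssim_\nu \tau^{1/4}$. Integrating by parts $N$ times in $\eta$ gains a factor $\tau^{1/4}/|x|$ each time, so also $|M_w(x)| \lesssim_{\nu,N} \tau^{1/4}(\tau^{1/4}/|x|)^N$ for every $N\ge 1$.

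The decisive step is then to combine these two bounds at the natural length scale $|x|\sim \tau^{1/4}$: splitting $\int |M_w|^4$ into $|x|\le \tau^{1/4}$ (where the uniform bound gives $\tau \cdot \tau^{1/4} = \tau^{5/4}$) and $|x|> \tau^{1/4}$ (where the IBP bound with $N$ large makes the integral also $O(\tau^{5/4})$) yields $\|M_w\|_{L^4(\mathbb R)}\lesssim_\nu \tau^{5/16}$. Poisson summation to pass from $M_w$ to $M$ on $\mathbb T$ costs only the $l\ne 0$ contributions, which the IBP decay controls cheaply since $\tau\le 1$. This gives $\|M\|_{L^4(\mathbb T)} \lesssim_\nu \tau^{5/16}$, and the second claim follows. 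The main obstacle is this $L^4$ bound on $M_w$: the extra $\xi^2$ in the symbol precludes using Lemma~\ref{leKbeta} as a black box, and the improvement from the trivial bound $\tau^{1/4}$ to the sharp $\tau^{5/16}$ reflects the spatial concentration of $M_w$ on the scale $\tau^{1/4}$, which is what the scaling and IBP argument above is designed to capture.
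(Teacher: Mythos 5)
Your argument is correct, and for the first inequality it coincides with the paper's proof (Young's inequality plus Lemma~\ref{leKbeta} with $d=1$, $p=\tfrac43$, $\beta=\tau$). For the second inequality you take a genuinely different route. The paper stays entirely on the Fourier side: after the same Young's inequality reduction to $\|K_2\|_{L^4(\mathbb T)}$ with $\widehat{K_2}(k)=\tau k^2 e^{-\tau(\nu k^4-k^2)}$, it invokes Hausdorff--Young, $\|K_2\|_{L^4(\mathbb T)}\lesssim \|\widehat{K_2}\|_{l^{4/3}_k(\mathbb Z)}$, and the latter is a one-line Riemann-sum computation (the sum concentrates on $|k|\sim\tau^{-1/4}$ and gives exactly $\tau^{5/16}$; note the paper's displayed $\tau^{-5/16}$ is a sign typo, as the lemma statement itself requires the positive power). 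You instead estimate the kernel in physical space via Poisson summation, the rescaling $\xi=\tau^{-1/4}\eta$, the uniform bound $|M_w|\lesssim_\nu\tau^{1/4}$, and repeated integration by parts to get decay beyond the scale $|x|\sim\tau^{1/4}$; splitting $\int|M_w|^4$ at that scale correctly yields $\tau^{5/4}$ from each piece, hence $\|M_w\|_{L^4(\mathbb R)}\lesssim_\nu\tau^{5/16}$, and the $l\ne 0$ Poisson tails are negligible. Your method is longer but mirrors the proof of Lemma~\ref{leKbeta}, gives pointwise kernel information, and would survive in settings where Hausdorff--Young is lossy or unavailable; the paper's method buys brevity, since for an $L^{4/3}\to L^\infty$ bound the dual exponent $4\ge 2$ makes Hausdorff--Young applicable and the sharp power drops out of the discrete $l^{4/3}$ sum with no case analysis.
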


\begin{proof}
The first inequality follows from Lemma
\ref{leKbeta}. For the second inequality denote
\begin{align}
K_2=\mathcal F^{-1}\left( \tau |k|^2 e^{-\tau (\nu |k|^4-k^2)}  \right).
\end{align}
We then have $\| K_{2} \|_{L_x^4(\mathbb T)}\lesssim \| \widehat{K_{2} } \|_{l_k^{\frac 43} (\mathbb Z)} \lesssim
\; \tau^{-\frac 5{16}}$ for $0<\tau\le 1$.
\end{proof}

\section{Analysis of the propagators}

\begin{lem}[One-step solvability of $S_N(\tau)$] \label{lemG1}
Let $\nu>0$. Suppose $a\in L^2(\mathbb T)$ with zero mean and $\| a \|_2 \le A_1<\infty$ for some $A_1>0$. There exists
$\tau_1=\tau_1(A_1, \nu)>0$ such that if $0<\tau \le \tau_1$, then there exists a unique solution
$w\in  C([0,\tau], H^2)$ to the equation
\begin{align}
\begin{cases}
\partial_t w = \partial_{xx} (w^3), \quad \, 0<t\le \tau, \\
w\Bigr|_{t=0} =e^{-\frac 12 \tau  (\nu \partial_x^4-\partial_x^2)} a.
\end{cases}
\end{align}
The solution $w$ satisfies 
\begin{align}
&\max_{0\le t \le \tau} \| w(t, \cdot ) \|_{2} \le  \| w(0,\cdot )\|_2 
\le  e^{c_{\nu} \tau} \|a\|_2, \notag \\
& \max_{0\le t\le \tau} \| w(t,\cdot) \|_4 \le \| w(0,\cdot) \|_4 \le
c_{\nu}^{(1)} (1+\tau^{-\frac 1{16} }) \| a\|_2, \notag 
\end{align}
where $c_{\nu}\ge 0$, $c_{\nu}^{(1)}>0$ depend only on $\nu$. Furthermore if $\| a\|_{H^k(\mathbb T)} \le A_2<\infty$
for some integer $k\ge 1$ and $A_2>0$, then we also have the bound
\begin{align} \label{G5.3}
\sup_{0\le t\le \tau} \| w(t,\cdot) \|_{H^k} \le C^{(1)}_{\nu, k, A_1, A_2} <\infty,
\end{align}
where $C^{(1)}_{\nu, k, A_1, A_2}>0$ depends on ($\nu$, $k$, $A_1$, $A_2$). 
\end{lem}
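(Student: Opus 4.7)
The plan is to handle the three bullets in four steps: first estimate the smoothed initial datum $w(0)=S_L(\tau/2)a$, then derive monotonicity of $L^2$ and $L^4$ along the nonlinear flow, then construct the solution by regularization, and finally propagate higher Sobolev norms. For Step 1, since $w(0)=e^{-\tfrac{\tau}{2}(\nu\partial_x^4-\partial_x^2)}a$, Plancherel together with the elementary inequality $\nu k^4-k^2\ge -1/(4\nu)$ for all $k\in\mathbb Z$ gives $\|w(0)\|_2\le e^{\tau/(8\nu)}\|a\|_2$, which fixes $c_\nu=1/(8\nu)$. For the $L^4$ bound I write $w(0)=K\ast a$ with $K$ the periodic kernel of $S_L(\tau/2)$, apply Young's inequality $\|w(0)\|_4\le\|K\|_{L^{4/3}}\|a\|_2$, and invoke Lemma~\ref{leKbeta} with $d=1$, $\beta=\tau/2$, $p=4/3$ (so that $d(\tfrac14-\tfrac{1}{4p})=\tfrac{1}{16}$) to conclude $\|K\|_{L^{4/3}}\lesssim_\nu(1+\tau^{-1/16})$.

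For Step 2, on a smooth solution of $\partial_t w=\partial_{xx}(w^3)$ I test against $w$ and against $w^3$ and integrate by parts to obtain
\begin{align*}
\tfrac12\tfrac{d}{dt}\|w\|_2^2 &= -3\!\int w^2(\partial_x w)^2\,dx \le 0, \\
\tfrac14\tfrac{d}{dt}\|w\|_4^4 &= -\|\partial_x(w^3)\|_2^2 \le 0.
\end{align*}
These monotonicity identities, combined with Step 1, deliver the two claimed a priori bounds. For Step 3, because $a\in L^2$ the biharmonic smoothing of $S_L(\tau/2)$ places $w(0)\in H^\infty$. The equation is degenerate quasilinear parabolic in divergence form, $\partial_t w = 3\partial_x(w^2\partial_x w)$, so I introduce a viscous regularization $\partial_t w_\delta = \partial_{xx}(w_\delta^3) - \delta\partial_x^4 w_\delta$, which is semilinear and admits local solutions in $C([0,T],H^k)$ via Duhamel/Picard using the smoothing of $e^{-\delta t\partial_x^4}$. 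The $L^2$ identity of Step 2 persists uniformly in $\delta\ge 0$ (the added term contributes $-\delta\|\partial_x^2 w_\delta\|_2^2$), so each $w_\delta$ is global; Aubin--Lions plus a diagonal extraction then yields a limit $w\in C([0,\tau],L^2)\cap L^2([0,\tau],H^1)$. Uniqueness follows from the direct comparison $\tfrac{d}{dt}\|w_1-w_2\|_2^2\lesssim(\|w_1\|_\infty^2+\|w_2\|_\infty^2)\|w_1-w_2\|_2^2$, closed by the $L^\infty$ control coming from $H^2$.

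For Step 4 (estimate \eqref{G5.3}), if $a\in H^k$ then the same Plancherel argument gives $\|w(0)\|_{H^k}\le e^{c_{\nu,k}\tau}\|a\|_{H^k}$. Differentiating the equation $j$ times for $1\le j\le k$ and pairing with $\partial_x^j w$ yields
\[
\tfrac12\tfrac{d}{dt}\|\partial_x^j w\|_2^2 = -\int \partial_x^{j+1}w\cdot\partial_x^{j+1}(w^3)\,dx,
\]
whose leading piece $-3\int w^2(\partial_x^{j+1}w)^2\,dx$ is non-positive, while the commutator terms from expanding $\partial_x^{j+1}(w^3)$ are controlled by Gagliardo--Nirenberg together with the 1D embedding $\|w\|_\infty\lesssim\|w\|_{H^1}$ and the $L^2$ bound already in hand. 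This produces an inequality of the form $\tfrac{d}{dt}\|w\|_{H^j}^2\le C_{j,\nu}\,P_j(\|w\|_{H^j})$ for some polynomial $P_j$, which can be integrated on $[0,\tau]$ once $\tau\le\tau_1(A_1,\nu)$ is chosen small enough.

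The hard part will be Step 3: the diffusion coefficient $3w^2$ vanishes at the zeros of $w$, so the equation is genuinely degenerate parabolic and a naive Duhamel/Picard on the bare equation loses derivatives. Executing the viscous regularization with uniform $L^2$ and $H^k$ estimates in $\delta$, and verifying that the added biharmonic spoils neither the $L^p$ monotonicity in the limit nor the energy inequalities of Step 4, is the main technical burden; once Step 3 is secured, the remaining pieces reduce to routine Fourier analysis, integration by parts, and Gronwall.
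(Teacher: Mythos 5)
Your architecture --- smooth the data with $S_L(\tau/2)$, prove $L^2$ and $L^4$ monotonicity by testing against $w$ and $w^3$, construct the solution through the viscous regularization $\partial_t w_\delta=\partial_{xx}(w_\delta^3)-\delta\partial_x^4 w_\delta$, and propagate higher norms by energy estimates --- is exactly the paper's, and Steps 1 and 2 are correct as written (the choice $p=4/3$ in Lemma~\ref{leKbeta} and the identities $\tfrac12\tfrac{d}{dt}\|w\|_2^2=-3\int w^2(\partial_xw)^2\,dx$ and $\tfrac14\tfrac{d}{dt}\|w\|_4^4=-\|\partial_x(w^3)\|_2^2$ are precisely what is needed). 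The gap is that you defer the one step that carries the content of the lemma. Your Step 3 extracts the limit only in $C([0,\tau],L^2)\cap L^2([0,\tau],H^1)$, which neither yields the claimed $C([0,\tau],H^2)$ solution nor supports your uniqueness argument: you invoke ``the $L^\infty$ control coming from $H^2$'' without having established any uniform-in-$\delta$ $H^2$ bound. The missing ingredients, which the paper's proof states explicitly, are the two quantitative facts $\tfrac{d}{dt}\|\partial_{xx}w_\delta\|_2^2\lesssim\|w_\delta\|_{H^2}^3$ (uniformly in $\delta$, exploiting the good sign of the top-order term) and $\|w(0)\|_{H^2}\lesssim(1+\tau^{-1/2})\|a\|_2$. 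The first is a Riccati-type inequality whose solution stays bounded for times $t\lesssim\|w(0)\|_{H^2}^{-1}$, so the $H^2$ bound survives on all of $[0,\tau]$ exactly when $\tau(1+\tau^{-1/2})\|a\|_2\ll1$, i.e. $\tau^{1/2}A_1\ll1$. This trade-off between the $\tau^{-1/2}$ loss in the smoothing estimate and the $O(\|w(0)\|_{H^2}^{-1})$ lifespan of the $H^2$ energy estimate is what makes $\tau_1$ depend only on $(A_1,\nu)$ and not on any higher norm of $a$ --- the whole point of the statement --- and your write-up never identifies it.

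A smaller issue sits in Step 4: a differential inequality $\tfrac{d}{dt}\|w\|_{H^j}^2\le C_{j,\nu}P_j(\|w\|_{H^j})$ with superlinear $P_j$ can only be integrated on a time interval shrinking with $\|w(0)\|_{H^j}$, hence with $A_2$, which would contradict the requirement that $\tau_1$ depend only on $A_1$. You need the $H^k$ estimate in tame form --- linear in the top norm, with coefficients controlled by the already-established $H^2$ (hence $W^{1,\infty}$) bound --- so that Gronwall closes on $[0,\tau]$ with $A_2$ entering only the constant $C^{(1)}_{\nu,k,A_1,A_2}$ and not the admissible time step.
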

\begin{proof}
For the local existence of solution, we can work with a regularized problem
$\partial_t w_{\delta} = -\delta \partial_x^4 w_{\delta} + \partial_{xx} ( (w_{\delta})^3)$
and take the limit $\delta \to 0$.  The key point in the argument is to derive uniform-in-$\delta$ bounds
on $\| \partial_{xx} w \|_2$.  Here and below we drop the subscript $\delta$ for simplicity.  Thanks
to the benign nonlinear diffusion term, it is not difficult to check that 
\begin{align}
\frac d {dt} \| \partial_{xx} w \|_2^2 \lesssim \| w \|_{H^2}^3.
\end{align}
Note that 
\begin{align}
\| w(0,\cdot)\|_{H^2} \lesssim (1+\tau^{-\frac 12} ) \|a\|_2.
\end{align}
It suffices for us to choose $\tau$ sufficiently small such that
\begin{align}
\tau (1+\tau^{-\frac 12} ) \| a\|_2 \ll 1.
\end{align}
We then obtain a local solution in $C([0,\tau], H^2)$. It is not difficult to check the uniqueness and 
the $L^2$, $L^4$ estimates.
The estimate \eqref{G5.3} follows from the $H^2$ bound and additional energy estimates. We omit
the details.
\end{proof}

\begin{lem}[$O(\frac 1 {\tau})$-step stability of 
$S_N(\tau)S_L(\frac{\tau}2)$ and 
$S_L(\frac {\tau}2)S_N(\tau) S_L(\frac {\tau}2)$] \label{lemG2}
Let $\nu>0$. Suppose $a\in L^2(\mathbb T)$ with zero mean and $\| a \|_2 \le A_1<\infty$ for some $A_1>0$.  Define $u^0=v^0 =a$ and
\begin{align}
& v^{n+1}=S_N(\tau) S_L(\frac {\tau}2) v^n, \quad n\ge 0; \notag \\
&u^{n+1} = S_L(\frac {\tau}2) S_N(\tau) S_L(\frac{\tau}2) u^n, \quad n\ge 0. \notag 
\end{align}
There exists
$\tau_2=\tau_2(A_1, \nu)>0$ such that if $0<\tau \le \tau_2$, then the following hold.
\begin{enumerate}
\item  The iterates $u^n$, $v^n$ are well-defined for all $n\le \frac {10}{\tau}$, and 
\begin{align} \label{G5.8}
\sup_{1\le n \le \frac {10}{\tau}} ( \| u^n \|_2 + \| v^n \|_2) \le C^{(2)}_{\nu} A_1,
\end{align}
where $C^{(2)}_{\nu}>0$ depends only on $\nu$. 

\item We have
\begin{align} \label{G5.9}
\sup_{\frac 1 {\tau} \le n \le \frac {10}{\tau}}
( \| u^n \|_{H^{40}} + \|v^n \|_{H^{40}}  ) \le  C^{(3)}_{\nu, A_1},
\end{align}
where $C^{(3)}_{\nu,A_1}>0$ depends only on ($\nu$, $A_1$).

\item If $\|a \|_{H^1} \le \tilde A_1$ for some $\tilde A_1<\infty$, then we also have
\begin{align}
\sup_{1\le n\le \frac {10}{\tau} } ( \| u^n \|_{H^1} + \| v^n \|_{H^1} ) \le 
\tilde C^{(3)}_{\nu, \tilde A_1},
\end{align}
where $\tilde C^{(3)}_{\nu, \tilde A_1}>0$ depends only on ($\nu$, $\tilde A_1$).

\end{enumerate}
\end{lem}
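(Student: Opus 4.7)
For Part (1), I plan to iterate the estimates on the two factors. The Fourier symbol bound $\sup_{k\in\mathbb Z} e^{-\tau(\nu k^4-k^2)/2} \le e^{\tau/(8\nu)}$ gives $\|S_L(\tau/2) g\|_2 \le e^{c_\nu \tau}\|g\|_2$, while Lemma~\ref{lemG1} gives the $L^2$-contraction $\|S_N(\tau) g\|_2 \le \|g\|_2$. Composing and iterating yields $\|u^n\|_2 \le e^{2c_\nu n\tau}\|u^0\|_2 \le C^{(2)}_\nu A_1$ for $n\tau \le 10$; the argument for $v^n$ is identical. One chooses $\tau_2$ so that the threshold $\tau_1$ of Lemma~\ref{lemG1}, evaluated at the propagated $L^2$-size $C^{(2)}_\nu A_1$, is respected at each step, ensuring well-posedness of every nonlinear sub-step.

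For Part (3), I augment with an $H^1$-energy estimate for the nonlinear flow. Two integrations by parts give
\begin{equation*}
\tfrac{d}{dt}\|w_x\|_2^2 = -6\int_{\mathbb T} w^2 w_{xx}^2\,dx + 4\int_{\mathbb T} w_x^4\,dx,
\end{equation*}
and the positive term is controlled via the 1D embedding $H^1\hookrightarrow L^\infty$, the interpolation $\|w_x\|_\infty^2 \lesssim \|w_x\|_2\|w_{xx}\|_2$, and Young's inequality, with part absorbed into the dissipation. A Grönwall bound yields $\|S_N(\tau) g\|_{H^1} \le (1+C\tau)\|g\|_{H^1}$ with $C=C(\nu,\tilde A_1)$; combined with $\|S_L(\tau/2) g\|_{H^1}\le e^{c\tau}\|g\|_{H^1}$ and iterated for $n\tau\le 10$, this gives the uniform $H^1$-bound.

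Part (2) is the main technical content. The plan is to combine a discrete Duhamel representation with the parabolic smoothing of $S_L$. Using the semigroup identity $S_L(\tau/2)^2=S_L(\tau)$ together with the mild-form expansion $S_N(\tau) g = g + \int_0^\tau \partial_{xx}(w(s)^3)\,ds$, one derives
\begin{equation*}
u^n = S_L(n\tau) u^0 + \sum_{j=0}^{n-1}\int_0^\tau S_L\!\bigl((n-j-\tfrac12)\tau\bigr)\partial_{xx}\bigl((w^j(s))^3\bigr)\,ds,
\end{equation*}
where $w^j(s)$ is the nonlinear sub-flow starting from $S_L(\tau/2) u^j$. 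The free term is bounded in $H^{40}$ by $C(n\tau)^{-10}e^{c n\tau}\|u^0\|_2$, uniformly for $1\le n\tau\le 10$. The hard part lies in the source sum: a naive attempt to recover $42$ derivatives through $S_L(t_j)$ in one shot loses powers of $\tau^{-1}$, because $t_j:=(n-j-\tfrac12)\tau$ can be as small as $\tau/2$.

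This obstacle will be overcome by a bootstrap in the Sobolev index, organized so that uniform ($\tau$-independent) bounds are propagated at each stage. One shows inductively, for $k=0,1,\ldots,40$, that $\|u^n\|_{H^k}$ is uniformly bounded once $n\tau\ge k/40$. The step from $H^{k-1}$ to $H^k$ uses the above Duhamel representation restarted at the previous threshold: the free term's smoothing $\|S_L(t)\|_{H^{k-1}\to H^k}\le Ct^{-1/4}$ is controlled since $t\ge 1/40$, and the source integrand is bounded by combining the $H^{k-1}$-algebra property of $(w^j(s))^3$ (valid in 1D for $k-1>1/2$, with an initial low-regularity bootstrap through $L^p$ and $L^\infty$ using Lemmas~\ref{leKbeta}--\ref{lem2.2} for $k\le 2$) with the discrete Riemann-sum bound $\tau\sum_j t_j^{-3/4}\lesssim ((n-n_{k-1})\tau)^{1/4}$. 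Each of the forty stages consumes only a $1/40$-fraction of the time budget, so all stages complete by $n\tau=1$, yielding the desired $H^{40}$-bound. The iterate $v^n$ is analyzed analogously, with one fewer $S_L$-half-step per iteration.
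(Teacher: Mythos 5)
Your treatment of Parts (1) and (2) follows essentially the same route as the paper: the $L^2$ bound by composing the symbol estimate for $S_L$ with the $L^2$-contraction of $S_N$ from Lemma~\ref{lemG1}, and the $H^{40}$ bound via the discrete Duhamel representation $u^{n+1}=S_L((n+1)\tau)u^0+\tau\sum_{k}\partial_{xx}S_L(\cdot)f^{n-k}$ with $f^j=\frac1\tau\int_0^\tau (w^j(s))^3\,ds$, $\|f^j\|_{4/3}\lesssim 1+\tau^{-3/16}$, followed by a bootstrap in the Sobolev index through the parabolic smoothing of $S_L$. The paper compresses this bootstrap into one sentence; your staged induction (gaining regularity once $n\tau$ passes successive thresholds, with the low-regularity stages run through $L^{4/3}\to L^\infty$ smoothing) is a reasonable and more explicit instantiation of it.

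Part (3), however, has a genuine gap. You claim $\|S_N(\tau)g\|_{H^1}\le(1+C\tau)\|g\|_{H^1}$ with $C=C(\nu,\tilde A_1)$ from the identity $\frac{d}{dt}\|w_x\|_2^2=-6\int w^2w_{xx}^2+4\int w_x^4$, absorbing the positive term into the dissipation. But the dissipation here is \emph{degenerate}: it is weighted by $w^2$ and vanishes where $w$ does, so it cannot absorb anything of the form $\varepsilon\|w_{xx}\|_2^2$ produced by Young's inequality after the interpolation $\|w_x\|_\infty^2\lesssim\|w_x\|_2\|w_{xx}\|_2$. The only clean comparison available is $\int w_x^4=-3\int ww_x^2w_{xx}\le 3(\int w^2w_{xx}^2)^{1/2}(\int w_x^4)^{1/2}$, i.e.\ $\int w_x^4\le 9\int w^2w_{xx}^2$, and $4\cdot 9>6$, so the sign is not recovered; indeed near profiles with $w_x^2=-2ww_{xx}$ (e.g.\ smoothings of $w\sim x^{2/3}$) one has $-6\int w^2w_{xx}^2+4\int w_x^4=10\int w^2w_{xx}^2>0$. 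The residual growth is then controlled only through $\|w_{xx}\|_2$, which along the iteration is merely $O(\tau^{-1/2})$ (the sub-step data is $S_L(\tau/2)u^n$ with $u^n\in H^1$), so the per-step factor is $1+O(\tau^{1/2})$ rather than $1+O(\tau)$, and the $O(1/\tau)$-fold composition does not close. The repair is to abandon the direct energy estimate on $S_N$ and run Part (3) through the same Duhamel representation as Part (2): the free term is bounded by $\|u^0\|_{H^1}$ directly, and a continuity/bootstrap hypothesis $\sup_{j\le n}\|u^j\|_{H^1}\lesssim 1$ upgrades $\|f^j\|_{4/3}\lesssim 1+\tau^{-3/16}$ to $\|f^j\|_{4/3}\lesssim 1$ (since $\|S_L(\tau/2)u^j\|_4\lesssim\|u^j\|_{H^1}$), after which $\tau\sum_k\|\partial_{xx}S_L(t_k)f^{n-k}\|_{H^1}\lesssim\tau\sum_k t_k^{-13/16}\lesssim(n\tau)^{3/16}$ closes the bootstrap uniformly in $n\le 10/\tau$.
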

\begin{proof}
That the iterates $u^n$, $v^n$ are well-defined along with the estimate \eqref{G5.8} 
follows from Lemma \ref{lemG1}. 
A key observation here is that $\|u^n\|_2$, $\|v^n\|_2$ remains $O(1)$ for $n\tau \lesssim 1$.
 It suffices for us to show \eqref{G5.9} for $u^n$ since the
estimates for $v^n$ follow from it.  To this end we rewrite
\begin{align}
u^{n+1} & =S_L(\frac {\tau}2) S_N (\tau ) S_L(\frac {\tau}2) u^n \notag \\
& = S_L(\frac {\tau}2 ) ( S_{L}(\frac {\tau}2) u^n + \tau \partial_{xx} f^n ) ),\label{G5.9a}
\end{align}
where 
\begin{align}
f^n = \frac 1 {\tau} \int_0^{\tau} w_n (s)^3  ds,
\end{align}
and $w_n$ solves the PDE
\begin{align}
\begin{cases}
\partial_t w_n = \partial_{xx} (  w_n^3 ), \quad 0<t \le \tau; \\
w_n \Bigr|_{t=0} = S_L(\frac {\tau} 2) u^n.
\end{cases}
\end{align}
By Lemma \ref{lemG1}, we have 
\begin{align}
\sup_{n\tau \le 10} \|f^n \|_{\frac 43} \lesssim 1+\tau^{-\frac 3{16}}.
\end{align}
Iterating \eqref{G5.9a}, we obtain
\begin{align}
u^{n+1} = S_L( (n+1)\tau) u^0 + \tau \sum_{k=0}^n  \partial_{xx} S_L( (k+1)\frac 12\tau)    f^{{n-k} }. 
\end{align}
The desired estimates then follow from bootstrapping smoothing estimates.
\end{proof}

\begin{lem}[Almost steady states are benign] \label{lemG3}
Let $\nu>0$. Suppose $f \in H^2(\mathbb T)$ has zero mean and satisfies
\begin{align}
\| \nu \partial_{xx} f - f^3 +\overline{f^3} + f \|_2 \le 1,
\end{align}
where  $\overline{f^3}$ denotes the average of
$f^3$ on $\mathbb T$. Then 
\begin{align}
\| f \|_{H^{40}(\mathbb T)} \le C^{(4)}_{\nu},
\end{align}
where $C^{(4)}_{\nu}>0$ depends only on $\nu$.  Furthermore if $0<\tau \le \tau^{(0)}(\nu)$
where $\tau^{(0)} (\nu)>0$ is a sufficiently small constant depending only on $\nu$, then
\begin{align} \label{Cnu5}
E( S_L(\frac {\tau}2) S_N(\tau) S_L(\frac{\tau} 2) f ) \le C^{(5)}_{\nu},
\end{align}
where $C^{(5)}_{\nu}>0$ depends only on $\nu$.
\end{lem}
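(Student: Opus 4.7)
My plan is: (i) extract an $H^2$ bound for $f$ from the $L^2$ estimate on the source $g := \nu\partial_{xx}f - f^3 + \overline{f^3} + f$ by exploiting the double-well coercivity; (ii) bootstrap this to the $H^{40}$ bound using the elliptic structure of the equation; (iii) combine the $H^{40}$ control of $f$ with the smoothing of $S_L$ and the local $H^k$-stability of $S_N$ from Lemma \ref{lemG1} to establish the one-step energy bound \eqref{Cnu5}.

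For step (i), observe that $g$ has mean zero since $\partial_{xx}f$, $f$ and $f^3-\overline{f^3}$ all do. Testing the relation $\nu\partial_{xx}f + f - f^3 + \overline{f^3} = g$ against the mean-zero function $f$ kills the constant $\overline{f^3}$ and yields
\[
\nu\|\partial_x f\|_2^2 + \|f\|_4^4 = \|f\|_2^2 - \int_{\mathbb T} f g\,dx.
\]
Combining $|\int fg| \le \|f\|_2$ with $\|f\|_2^2 \lesssim \|f\|_4^2 \le \epsilon\|f\|_4^4 + C_\epsilon$ and Young absorption yields $\|f\|_{H^1} + \|f\|_4 \lesssim_\nu 1$. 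Next, testing against $-\partial_{xx}f$ produces the crucial positive contribution $-\int f^3 \partial_{xx}f = 3\int f^2(\partial_x f)^2 \ge 0$; together with Cauchy--Schwarz--Young on the $g$-contribution this gives $\|\partial_{xx}f\|_2 \lesssim_\nu 1$, hence $\|f\|_{H^2} \lesssim_\nu 1$.

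For step (ii), one-dimensional Sobolev embedding gives $\|f\|_\infty \lesssim_\nu 1$, and the algebra property of $H^s$ for $s \ge 1$ then yields $\|f^3\|_{H^s} \lesssim_\nu \|f\|_{H^s}$. Viewing the equation as $\nu\partial_{xx}f = g - f + (f^3 - \overline{f^3})$ and inverting $\partial_{xx}$ on high frequencies gains two derivatives. The main obstacle is that $g$ is only controlled in $L^2$, so a naive frequency-space bootstrap stalls at $s=0$; instead, the strategy is to perform weighted energy identities by testing the equation against $(-\partial_{xx})^{j} f$ for $j=1,\ldots,20$. In each such test, the $g$-term is integrated by parts so that it contributes only $\|g\|_2$ paired with a lower-order norm of $f$ (absorbed via Young against the top-order piece produced by $\nu\partial_{xx}f$), while the cubic contribution $\int \partial_x^{j}(f^3)\,\partial_x^{j} f$ is expanded via Leibniz and dominated by the $L^\infty$ bound on $f$ times the lower-order Sobolev norms already controlled. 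Inductively, each level raises the available regularity by one, culminating in $\|f\|_{H^{40}} \lesssim_\nu 1$. The delicate point is arranging the cubic terms so they never overwhelm the good top-order coercive contribution at level $j$.

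For step (iii), with $\|f\|_{H^{40}}\lesssim_\nu 1$ in hand, the bound $\|S_L(\tau/2)h\|_{H^k}\le e^{c_\nu \tau}\|h\|_{H^k}$ transfers the $H^{40}$ control to $S_L(\tau/2)f$. Lemma \ref{lemG1}, applied with $a = S_L(\tau/2) f$ and $k=40$, then gives $\|S_N(\tau) S_L(\tau/2) f\|_{H^{40}} \lesssim_\nu 1$ provided $\tau \le \tau^{(0)}(\nu)$ is sufficiently small. A final application of $S_L(\tau/2)$ preserves this bound. Since $E(u) = \tfrac{\nu}{2}\|\partial_x u\|_2^2 + \tfrac{1}{4}\|u^2-1\|_2^2 \le C(\|u\|_{H^1}^2 + \|u\|_{H^1}^4 + 1)$, the $H^1$-control inherited from the $H^{40}$-bound immediately gives \eqref{Cnu5}.
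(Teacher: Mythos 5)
Your step (i) reproduces the paper's argument exactly: testing the relation against the mean--zero function $f$ yields the energy inequality $\nu \|\partial_x f\|_2^2 + \|f^2-\tfrac12\|_2^2 \le \|f\|_2 + \tfrac{\pi}{2}$, from which the $H^1$ and $L^4$ bounds follow by absorption, and your step (iii) is consistent with the paper's remark that \eqref{Cnu5} ``follows easily'' (the energy $E$ only sees $H^1\cap L^4$, so the $H^1$ bound, the $e^{c_\nu\tau}$ growth of $S_L$, and the stability estimates of Lemma \ref{lemG1} suffice). The genuine gap is in step (ii). The hypothesis controls $g:=\nu\partial_{xx}f-f^3+\overline{f^3}+f$ only in $L^2$, so elliptic regularity for $\nu\partial_{xx}f=g-f+f^3-\overline{f^3}$ stalls at $H^2$, not at $s=0$ as you say. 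Your proposed remedy---testing against $(-\partial_{xx})^{j}f$---does not repair this: the pairing of $g$ with $(-\partial_{xx})^{j}f$ can only be estimated by $\|g\|_2\,\|\partial_x^{2j}f\|_2$, and for $j\ge 2$ the norm $\|\partial_x^{2j}f\|_2$ is of strictly \emph{higher} order than the coercive term $\nu\|\partial_x^{j+1}f\|_2^2$; you cannot integrate by parts onto $g$ without using derivatives of $g$ that you do not control. The induction therefore fails already at $j=2$ (you cannot even reach $H^3$), and the delicate point is the $g$-pairing, not the cubic terms.

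In fairness, the paper's own proof is equally laconic here (``bootstrapping yields the desired estimate''), and in fact the $H^{40}$ conclusion does not follow from the stated hypothesis alone: taking $f=\epsilon\sin(Nx)$ with $\epsilon=\bigl(2\sqrt{\pi}\,\nu N^2\bigr)^{-1}$ gives $\|\nu\partial_{xx}f-f^3+\overline{f^3}+f\|_2\le 1$ for $N$ large while $\|f\|_{\dot H^{40}}\sim N^{38}/\nu\to\infty$. What survives, and what is actually used downstream in the energy-stability argument (where $H^{40}$ control of the iterate $u^{n_0}$ is supplied separately by smoothing estimates), is the $H^1\cap L^4$ bound of your step (i) together with \eqref{Cnu5}. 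To make your write-up sound you should either import an a priori $H^{40}$ (or $H^{38}$ on $g$) hypothesis, or drop the $H^{40}$ claim and derive \eqref{Cnu5} directly from the $H^1\cap L^4$ bound as in your steps (i) and (iii).
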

\begin{rem}
We pick the constant $1$ for convenience. If $\| \nu \partial_{xx} f - f^3 +\overline{f^3} + f \|_2 \le 
\epsilon_0$ for some $\epsilon_0>0$, then we obtain
$\|f\|_{H^{40}(\mathbb T)} \le C^{(4)}_{\nu, \epsilon_0}$, where $C^{(4)}_{\nu,\epsilon_0}>0$
depends on ($\nu$, $\epsilon_0$).
\end{rem}
\begin{proof}
By a simple energy estimate, we have
\begin{align}
\nu \| \partial_x f \|_2^2 + \|f^2 -\frac 12 \|_2^2 \le  \|f\|_2 + \frac {\pi}2.
\end{align}
One can then obtain the $H^1$ bound. Bootstrapping yields the desired estimate for
higher Sobolev norms. The estimate  \eqref{Cnu5} also follows easily.
\end{proof}

\begin{lem}[One-step strict energy dissipation for non-steady data] \label{lemG4}
Let $\nu>0$. Suppose $a\in H^{40}(\mathbb T)$ and has zero mean. Assume
\begin{align}
& \| \nu \partial_{xx} a -a^3 + \overline{a^3}+a \|_{2} \ge 1, \notag \\
& \| a\|_{H^{40}(\mathbb T)} \le B_1<\infty, 
\end{align}
where  $B_1$ is a  given constant, and $\overline{a^3}$ denotes
 the average of $a^3$ on $\mathbb T$. There exists $\tau_3=\tau_3(\nu, 
B_1)>0$ sufficiently small such that if $0<\tau \le \tau_3$, then
\begin{align}
E( S_L(\frac {\tau}2) S_N(\tau) S_L(\frac {\tau}2) a) < E(a).
\end{align}
\end{lem}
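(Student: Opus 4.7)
\medskip
\noindent\textbf{Proof proposal for Lemma \ref{lemG4}.}
The plan is to Taylor-expand the Strang step so that the leading $O(\tau)$ variation of $E$ is exactly the Cahn--Hilliard dissipation integral, and then use the non-steady assumption to show that this dissipation beats the $O(\tau^2)$ error.

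Let $L=\nu\partial_x^4+\partial_x^2$, write $\mu(a)=-\nu\partial_{xx}a+a^3-a$ for the chemical potential, and set $u^{+}=S_L(\tfrac{\tau}{2})S_N(\tau)S_L(\tfrac{\tau}{2})a$. First I would establish the first-order expansion
\begin{equation*}
u^{+}=a+\tau\,\partial_{xx}\mu(a)+R,\qquad \|R\|_{L^2}\le C^{(6)}_{\nu,B_1}\,\tau^2,
\end{equation*}
where the constant depends only on $\nu$ and $B_1$. This is done in three substeps: expand $S_L(\tau/2)a=a-\tfrac{\tau}{2}La+O(\tau^2)$ in $L^2$ using spectral calculus (costing four derivatives of $a$); expand $S_N(\tau)$ by Duhamel applied to $\partial_t w=\partial_{xx}(w^3)$, giving $S_N(\tau)b=b+\tau\partial_{xx}(b^3)+O(\tau^2)$ with error controlled by Lemma \ref{lemG1} and the $H^{40}$-bound propagated through $S_L$ (which is contractive in every $H^k$); finally expand the outer $S_L(\tau/2)$ the same way. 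After the three substitutions, the $O(\tau)$ contributions combine into $\tau(-La+\partial_{xx}(a^3))=\tau\,\partial_{xx}\mu(a)$, because $La=\partial_{xx}(\nu\partial_{xx}a+a)$ and $\mu(a)=-\nu\partial_{xx}a-a+a^3$.

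Next I would plug this expansion into the energy identity. Writing $h=u^{+}-a$ and using $F(a+h)-F(a)-F'(a)h=\int_0^1(1-s)F''(a+sh)h^2\,ds$ together with integration by parts on the gradient term,
\begin{equation*}
E(u^{+})-E(a)=\int_{\mathbb T}\mu(a)\,h\,dx+\frac{\nu}{2}\|\partial_x h\|_2^2+\int_0^1(1-s)\int_{\mathbb T}F''(a+sh)h^2\,dx\,ds.
\end{equation*}
The first term splits as $\tau\int\mu(a)\partial_{xx}\mu(a)\,dx+\int\mu(a)R\,dx=-\tau\|\partial_x\mu(a)\|_2^2+\int\mu(a)R\,dx$. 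The remainder pieces are all controlled by $C^{(7)}_{\nu,B_1}\,\tau^2$: for $\int\mu(a)R$ use Cauchy--Schwarz with $\|\mu(a)\|_2\lesssim_{B_1}1$; for $\|\partial_x h\|_2^2$ note $h=\tau\partial_{xx}\mu(a)+R$ with $\partial_{xx}\mu(a)\in H^{38}$, so the $H^1$ norm of $h$ is $O(\tau)$ with an $H^{40}$-controlled constant (here one also has to verify that $R$ is $O(\tau^2)$ in $H^1$, which again follows from the fact that all propagators preserve high-regularity bounds in view of Lemma \ref{lemG1} and smoothing of $S_L$); for the $F''$ term use $|F''|\lesssim 1+\|a\|_\infty^2+\|h\|_\infty^2\lesssim_{B_1}1$ and $\|h\|_2^2\lesssim_{B_1}\tau^2$.

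Finally I would invoke the non-steady assumption. Since $a$ and $\partial_{xx}a$ are mean zero, $\overline{\mu(a)}=\overline{a^3}$, so $\mu(a)-\overline{\mu(a)}=-(\nu\partial_{xx}a-a^3+\overline{a^3}+a)$, which has $L^2$-norm $\ge 1$ by hypothesis. The Poincaré inequality on the mean-zero subspace of $\mathbb T$ then gives
\begin{equation*}
\|\partial_x\mu(a)\|_2^2=\|\partial_x(\mu(a)-\overline{\mu(a)})\|_2^2\ge\|\mu(a)-\overline{\mu(a)}\|_2^2\ge 1.
\end{equation*}
Combining with the expansion above yields $E(u^{+})-E(a)\le -\tau+C^{(8)}_{\nu,B_1}\tau^2$, which is strictly negative once $\tau\le\tau_3:=1/(2C^{(8)}_{\nu,B_1})$. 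The main obstacle, and the only place where the high-regularity hypothesis $\|a\|_{H^{40}}\le B_1$ is really used, is obtaining the quantitative bound $\|R\|_{L^2}\le C^{(6)}_{\nu,B_1}\tau^2$ with constants uniform in the size of $a$; the $\partial_{xx}(a_1^3)$ term pushed through two applications of $S_L$ demands several derivatives, and one must also bootstrap through the nonlinear step $S_N(\tau)$ using Lemma \ref{lemG1} to control $\|w(s,\cdot)^3-a_1^3\|_{L^2}$ uniformly for $s\in[0,\tau]$.
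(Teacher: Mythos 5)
Your proof is correct, but it is organized differently from the paper's. The paper never expands the energy of the numerical iterate directly: it first invokes the \emph{exact} PDE flow $u^{\mathrm P}$ started from $a$, uses the exact dissipation identity $E(u^{\mathrm P}(\tau))+\int_0^\tau\|\partial_x(\nu\partial_{xx}u^{\mathrm P}-(u^{\mathrm P})^3+u^{\mathrm P})\|_2^2\,ds=E(a)$ together with Poincar\'e and the non-steady hypothesis to get $E(u^{\mathrm P}(\tau))+\tfrac12\tau\le E(a)$, and then reduces the lemma to the consistency estimate $\|u^{\mathrm P}(\tau)-S_L(\tfrac{\tau}2)S_N(\tau)S_L(\tfrac{\tau}2)a\|_{H^1}=O(\tau^2)$ (proved by comparing both to an implicit--explicit one-step scheme), since the energy is Lipschitz on $H^1$-bounded sets. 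You instead Taylor-expand the Strang propagator itself, $u^{+}=a+\tau\partial_{xx}\mu(a)+O_{H^1}(\tau^2)$, and expand $E$ to second order around $a$, extracting the dissipation $-\tau\|\partial_x\mu(a)\|_2^2$ directly; the Poincar\'e/non-steadiness step is then identical. Your route is more self-contained (no local theory for the continuous Cahn--Hilliard flow is needed, only the one-step solvability of $S_N$ from Lemma \ref{lemG1}), at the price of having to control the quadratic energy remainders $\tfrac{\nu}{2}\|\partial_x h\|_2^2$ and the $F''$ term, which you correctly identify as requiring an $H^1$ bound on the remainder $R$; the paper's route gets the leading dissipation for free from the exact energy law and concentrates all the work in the $O(\tau^2)$ consistency bound. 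One minor inaccuracy: $S_L(\tau)=e^{-\tau(\nu\Delta^2+\Delta)}$ is \emph{not} contractive in $H^k$, since its symbol $e^{-\tau(\nu|k|^4-|k|^2)}$ exceeds $1$ at low frequencies when $\nu<1$; one only has $\|S_L(\tau)g\|_{H^k}\le e^{c_\nu\tau}\|g\|_{H^k}$, which is all your argument actually needs over a single step.
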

\begin{proof}
Denote by $u^{\mathrm{P}}$ as the exact PDE solution corresponding to initial data
$a$. We clearly have
\begin{align}
E(u^{\mathrm P} (\tau) ) + \int_0^{\tau}
\left\| \partial_x ( \nu \partial_{xx} u^{\mathrm{P}}  -
(u^{\mathrm{P} } )^3  +u^{\mathrm{P}}  ) \right\|_2^2 ds  = E(a).
\end{align}
By Poincar\'e we have
\begin{align}
\left\| \partial_x ( \nu \partial_{xx} u^{\mathrm{P}}  -
(u^{\mathrm{P} } )^3  +u^{\mathrm{P}} ) \right\|_2
\ge \left\|  \nu \partial_{xx} u^{\mathrm{P}}  -
(u^{\mathrm{P} } )^3   + \overline{  (u^{\mathrm{P} } )^3 } +u^{\mathrm{P}} \right\|_2.
\end{align}

Thanks to the high regularity assumption on $a$ and the usual local theory, we have
\begin{align}
\sup_{0\le s\le \tau} \left\| \nu \partial_{xx} u^{\mathrm{P}} (s)  -
(u^{\mathrm{P} } (s)  )^3  + \overline{ (u^{\mathrm{P} }(s) )^3}  +u^{\mathrm P}(s)
 -      \nu \partial_{xx} a +a^3 - \overline{a^3} -a \right\|_2 \lesssim \tau. 
\end{align}

It follows that 
\begin{align}
E(u^{\mathrm{P} } (\tau) ) + \tau \| \nu \partial_{xx} a- a^3 + \overline{a^3} +a\|_2^2
+O(\tau^2) \le E(a).
\end{align}
Thus for $\tau>0$ sufficiently small we have
\begin{align}
E(u^{\mathrm{P} } (\tau) ) +\frac 12 \tau  \le E(a).
\end{align}

We now only need to check that
\begin{align}
\| u^{\mathrm P}(\tau) - S_L(\frac {\tau} 2) S_N(\tau) S_L(\frac{\tau} 2) a \|_{H^1(\mathbb T)}
=O(\tau^2).
\end{align}

Consider an implicit-explicit discretization:
\begin{align}
\frac {w -a}{\tau} = - \nu \partial_x^4 w - \partial_x^2 w + \partial_x^2 (a^3).
\end{align}
It is not difficult to check that 
\begin{align}
\| u^{\mathrm P}(\tau) -w \|_{H^1}  =O(\tau^2)
\end{align}
and 
\begin{align}
w= (1+\nu \tau \partial_x^4 + \tau \partial_x^2)^{-1} a +
\tau (1+\nu \tau \partial_x^4 + \tau \partial_x^2)^{-1} \partial_{xx} (a^3).
\end{align}

On the other hand, we have
\begin{align}
\| S_L(\frac {\tau} 2) S_N(\tau) S_L(\frac {\tau} 2) a
- S_L(\frac {\tau} 2) ( S_L(\frac {\tau} 2 ) a + \tau \partial_{xx} (a^3 ) ) \|_{H^1}
=O(\tau^2).
\end{align}

The desired result then follows from the estimates
\begin{align}
& \| (1+\nu \tau \partial_x^4 + \tau \partial_x^2)^{-1} a  - S_L(\tau) a \|_{H^1} = O(\tau^2); 
\notag \\
& \| (1+\nu \tau \partial_x^4 + \tau \partial_x^2)^{-1} \partial_{xx} (a^3)
- S_L(\frac {\tau} 2)  \partial_{xx} (a^3) \|_{H^1} =O(\tau). \notag
\end{align}
\end{proof}

\begin{thm} \label{thm3.1}
Let $\nu>0$. Assume $u^0 \in H^1(\mathbb T)$ with zero mean. Suppose
$\|u^0\|_2 \le \gamma_0$ and $\| u^0 \|_{H^1} \le \gamma_1$.  Define
\begin{align}
u^{n+1} = S_L(\frac {\tau}2) S_N(\tau) S_L(\frac {\tau}2) u^n, \quad n\ge 0.
\end{align}
There exists $\tau_*=\tau_*(\nu,\gamma_0)>0$ sufficiently small such that if
$0<\tau \le \tau_*$, then
\begin{align} \label{G5.31}
\sup_{n\ge 1} \| u^n \|_{H^1(\mathbb T)} \le F^{(0)}_{\nu, \gamma_1},
\end{align}
where $F^{(0)}_{\nu, \gamma_1} >0$ depends only on ($\nu$, $\gamma_1$).
\end{thm}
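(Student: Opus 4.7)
The plan is to upgrade the short-time $H^1$ control from Lemma \ref{lemG2} to a uniform-in-$n$ bound by running a dichotomy-based bootstrap on the energy $E(u^n)$: Lemma \ref{lemG3} caps the energy in the near-steady regime, while Lemma \ref{lemG4} strictly dissipates it in the non-steady regime.

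First I would observe that, since every $u^n$ has mean zero, Poincar\'e converts any uniform bound $\sup_n E(u^n) \le M$ into $\sup_n \|u^n\|_{H^1} \le F(\nu, M)$; so the theorem reduces to bounding $E(u^n)$ uniformly. On the window $1 \le n \le 10/\tau$, Lemma \ref{lemG2}(1)(3) already yields $\|u^n\|_{H^1} \le \tilde C^{(3)}_{\nu,\gamma_1}$, and part (2) gives $\|u^{n_0}\|_{H^{40}} \le C^{(3)}_{\nu,\gamma_0}$ for any index $n_0 \in [1/\tau, 10/\tau]$. The latter produces an a priori energy bound $E(u^{n_0}) \le E_0^*$ depending only on $(\nu,\gamma_0)$; this is the crucial observation that removes the $\gamma_1$-dependence from the long-time analysis and ultimately allows $\tau_*$ to depend only on $(\nu,\gamma_0)$.

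Next I would fix a threshold $E_* = E_*(\nu)$ large enough to dominate both $C^{(5)}_\nu$ and the energy of any function satisfying $\|f\|_{H^{40}} \le C^{(4)}_\nu$. By the contrapositive of Lemma \ref{lemG3}, $E(u^n) > E_*$ then forces $\|\nu \partial_{xx} u^n - (u^n)^3 + \overline{(u^n)^3} + u^n\|_2 \ge 1$, i.e.\ places $u^n$ in the non-steady regime of Lemma \ref{lemG4}. Setting $M := \max(E_0^*, E_*)$, I would induct on $n \ge n_0$ with hypothesis $E(u^n) \le M$. In the inductive step, $E(u^n) \le M$ together with mean-zero gives $\|u^n\|_2 \le A_0(\nu, M)$; picking any $m$ in the window $[n - 10/\tau, n - 1/\tau]$ (with $u^0$ and the bound $\gamma_0$ replacing $A_0$ if $m < n_0$) and applying Lemma \ref{lemG2}(2) to the restarted iteration from $u^m$ yields $\|u^n\|_{H^{40}} \le B_1^* := C^{(3)}_{\nu, A_0}$. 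Now the dichotomy runs: if $u^n$ is near-steady, Lemma \ref{lemG3} gives $E(u^{n+1}) \le C^{(5)}_\nu \le M$; otherwise Lemma \ref{lemG4} with $B_1 = B_1^*$ gives $E(u^{n+1}) \le E(u^n) - \tau/2 \le M$. The induction closes, and Poincar\'e then converts the uniform energy bound into \eqref{G5.31}.

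The main obstacle is the $H^{40}$ control required by Lemma \ref{lemG4}: unlike the energy, the $H^{40}$ norm is not monotone and must be regenerated at each step from the rolling $L^2$ bound using Lemma \ref{lemG2}'s smoothing window of length $\sim 1/\tau$. The constants $A_0$ and $B_1^*$, together with the smallness requirements $\tau_2(A_0,\nu)$ and $\tau_3(\nu,B_1^*)$ pulled from Lemmas \ref{lemG2} and \ref{lemG4}, depend only on $(\nu,\gamma_0)$, matching the dependence stated in the theorem, while the final $H^1$ bound $F^{(0)}_{\nu,\gamma_1}$ inherits its $\gamma_1$-dependence purely from the short-time segment $n \le 10/\tau$.
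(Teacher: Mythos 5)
Your proposal is correct and follows essentially the same route as the paper: the short-time window from Lemma \ref{lemG2}, a uniform energy bound enforced by the dichotomy between Lemma \ref{lemG3} (near-steady) and Lemma \ref{lemG4} (non-steady, with the $H^{40}$ hypothesis regenerated by restarting Lemma \ref{lemG2} from a step $\sim 1/\tau$ earlier), and Poincar\'e to convert energy control into the $H^1$ bound. The only difference is that you run a direct strong induction where the paper argues by contradiction at the first time the energy threshold is exceeded; these are logically equivalent.
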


\begin{proof}
By Lemma \ref{lemG2}, for some $\tilde \tau_1(\nu,\gamma_0)>0$
sufficiently small and  $0<\tau \le \tilde \tau_1(\gamma,\gamma_0)$,
we have
\begin{align}
\sup_{1\le n \le \frac {10}{\tau} } \| u^n \|_2
+ \sup_{\frac 1 {\tau} \le n \le \frac {10}{\tau}} (\| u^n \|_{H^{40} } +E(u^n) )
\le F^{(1)}_{\nu, \gamma_0},
\end{align}
where $F^{(1)}_{\nu,\gamma_0}$ depends only on ($\nu$, $\gamma_0$).

Define
\begin{align}
G_{\nu, \gamma_0} = F_{\nu, \gamma_0}^{(1)} +C_{\nu}^{(5)}+1,
\end{align}
where $C_{\nu}^{(5)}>0$ is the same as in \eqref{Cnu5}.

\underline{Claim}: For some $\tilde \tau_2(\nu,\gamma_0)>0$ sufficiently small
and $0<\tau \le \tilde \tau_2(\nu,\gamma_0)$, we have
\begin{align}
\sup_{n\ge \frac 1 {\tau}} E(u^n) \le  G:=G_{\nu, \gamma_0},
\end{align}

To prove the claim we argue by contradiction.  The smallness condition 
$0<\tau \le \tilde  \tau_2(\nu, \gamma_0)$ will be assumed in the argument below.
The needed smallness of $\tilde \tau_2(\nu, \gamma_0)$ can be easily worked out
from the conditions specified in the used  lemmas  such as Lemma \ref{lemG3} and so on.

Suppose $n_0\ge \frac 1 {\tau}$ is the 
first integer such that 
\begin{align} \label{G5.35}
E(u^{n_0}) \le G, \quad E(u^{n_0+1}) >G.
\end{align}
Clearly by our choice of $G$, we have $n_0\ge \frac {10}{\tau}$.  By Lemma \ref{lemG3}
we must have
\begin{align} \label{G5.36a}
\| \nu \partial_{xx} u^{n_0} - (u^{n_0})^3 + \overline{ (u^{n_0})^3}
+ u^{n_0} \|_2 >1.
\end{align}
Since $n_0\ge \frac {10}{\tau}$,  we have $E(u^{n_0- j_0}) \le G$ for some integer
$\frac 1 {\tau} \le j_0 \le \frac {1} {\tau}+2$.  By using smoothing estimates we obtain
\begin{align} \label{G5.36b}
\| u^{n_0} \|_{H^{40}(\mathbb T)} \le C_{\nu, G},
\end{align}
where $C_{\nu, G}>0$ depends on ($\nu$, $G$). Since $G$ depends on ($\nu$, $\gamma_0$),
we have $C_{\nu, G}$ depends only on ($\nu$, $\gamma_0$). 
By \eqref{G5.36a}, \eqref{G5.36b} and Lemma \ref{lemG4}, we obtain for sufficiently small
$\tau$ that
\begin{align}
E(u^{n_0+1} ) < E(u^{n_0})
\end{align}
which is clearly a contradiction to \eqref{G5.35}. Thus we have proved the claim.
Finally the estimate \eqref{G5.31} follows from a uniform $H^1$ estimate of $\|u^n \|_{H^1}$ 
for $1\le n \le \frac 1{\tau}$ using the condition $\|u^0\|_{H^1} \le \gamma_1$. 
\end{proof}
\section{Proof of Theorem \ref{thm0} and \ref{thm1}}
\begin{proof}[Proof of Theorem \ref{thm0}]
The $H^1$ estimate follows from Theorem \ref{thm3.1}. Higher order estimates follow
from the smoothing estimates.
\end{proof}
\begin{proof}[Proof of Theorem \ref{thm1}]
Thanks to the uniform $H^{40}$ estimates on the numerical
solution and the exact PDE solution, we only need to check consistency.  To simplify the notation,
we shall denote
\begin{align}
L= - \nu \partial_{x}^4  - \partial_{x}^2.
\end{align}

\underline{Consistency for the propagator $S_L(\frac{\tau}2) S_N(\tau) S_L(\frac {\tau}2)$}. 
We first note that if 
\begin{align}
\begin{cases}
\partial_t w = \partial_{xx}(w^3), \quad 0<t\le \tau; \\
w\Bigr|_{t=0} = b,
\end{cases}
\end{align}
where $w$ admits uniform control of its Sobolev norm, then 
\begin{align}
w(\tau) = b + \tau \partial_{xx}(b^3) + \frac 12 \tau^2
\partial_{xx} (3b^2 \partial_{xx}(b^3) ) + O(\tau^3).
\end{align}
Now if $u= S_L(\frac {\tau}2 ) S_N(\tau) S_L(\frac {\tau}2) a$, then 
(below $b= S_L(\frac {\tau}2) a$)
\begin{align}
u &=S_L(\frac{\tau}2) \Bigl( b + \tau \partial_{xx}(b^3) + \frac 12 \tau^2
\partial_{xx} (3b^2 \partial_{xx}(b^3) ) \Bigr) + O(\tau^3) \notag \\
& = S_L(\tau) a +  \tau S_L(\frac {\tau}2) 
\partial_{xx} ( (a+ \frac 12 \tau L a )^3 ) + \frac 12 \tau^2 
\partial_{xx} (3a^2 \partial_{xx}(a^3) ) + O(\tau^3) \notag \\
& = S_L(\tau ) a+  \tau \partial_{xx} (a^3)
+ \frac 12 \tau^2 \partial_{xx} \Bigl(  L(a^3) + 3a^2 (La+\partial_{xx} (a^3) ) \Bigr)+ O(\tau^3).
\end{align}

\underline{Reformulation of the exact PDE solution}. 
Let $u^{\mathrm{P}}$ be the exact PDE solution to \eqref{1} with initial data $\tilde a$.  We have
\begin{align}
u^{\mathrm{P}} (\tau) & = S_L(\tau) \tilde a + \int_0^{\tau} S_L(\tau -s) \partial_{xx} (u(s)^3) ds \notag \\
& =S_L(\tau)\tilde a + \int_0^{\tau} ( 1+ (\tau-s) L) \partial_{xx} (u(s)^3) ds + O(\tau^3) \notag \\
& = S_L(\tau) \tilde a+ \int_0^{\tau} \partial_{xx} 
\Bigl( \bigl(\,  \tilde a + s( L\tilde a + \partial_{xx}({\tilde a}^3) ) \,\bigr)^3        
\Bigr) ds + 
\int_0^{\tau} (\tau-s)L \partial_{xx}({\tilde a}^3) ds + O(\tau^3) \notag \\
& = S_L(\tau) \tilde a+ \tau \partial_{xx} ({\tilde a}^3)
+ \frac 12 \tau^2 \partial_{xx} \Bigl(  L({\tilde a}^3) + 3{\tilde a}^2 (L\tilde a+\partial_{xx} ({\tilde a}^3) ) \Bigr)+ O(\tau^3).
\end{align}
The desired estimate then clearly follows.
\end{proof}

\frenchspacing
\bibliographystyle{plain}

\end{document}